\numberwithin{equation}{section}
\theoremstyle{definition}
\newtheorem{theorem}{Theorem}[section]
\newtheorem{lemma}[theorem]{Lemma}
\newtheorem{cor}[theorem]{Corollary}
\newtheorem{remark}[theorem]{Remark}
\newtheorem{definition}[theorem]{Definition}
\newtheorem*{claim}{Claim}
\newcommand{\es}{\mathcal{S}}
\newcommand{\esns}{\es^{\text{\/ nonsep}}}
\newcommand{\essep}{\es_{\text{sep}}}
\newcommand{\ins}{\iota^{\text{nonsep}}}
\newcommand{\isep}{\iota_{\text{sep}}}
\newcommand{\Z}{{\mathbb Z}}
\newcommand{\Aut}[1]{{\rm Aut}\left( #1 \right)}
\newcommand{\M}{\mathcal{M}}
\newcommand{\gi}{i_{\text{geom}}}
\newcommand{\jfi}{i_{\text{JF}}}
\DeclareMathOperator{\Ker}{Ker}
\begin{document}
\title{Geometric intersection in representations of mapping class groups of surfaces}
\author{Yasushi Kasahara}
\date{\today}
\bibliographystyle{amsplain}

\maketitle

\begin{abstract}
 We show that the detection of geometric intersection in an arbitrary representation
 of the mapping class group of surface implies the injectivity of that representation up to center,
 and vice versa.
 As an application, we discuss the geometric intersection in the Johnson filtration. Also, we
 further consider the problem of detecting the geometric intersection between separating
 simple closed curves in a representation.
\end{abstract}
\section{Introduction} \par

Detecting geometric intersection can be a powerful tool for the study of representations of the
mapping class group of surface. For instance, a certain kind of such detection
in the Lawrence--Krammer representation by Bigelow
\cite{bigelow:linearity} led to an affirmative solution to the linearity problem
for Artin's braid group which is nothing but the mapping class group of  a punctured disk. Conversely, 
the impossibility of detecting a similar kind of geometric intersection
had led to the unfaithfulness results for the Burau representation of the braid group 
as shown by Moody \cite{moody}, Long--Paton \cite{long-paton},
and Bigelow \cite{bigelow:burau}.
As for the mapping class group of a surface of  higher genus,
this type of result was given by Suzuki \cite{suzuki_2} for
the Magnus representation of the Torelli group.
In each of all these works, it was fundamental to establish a criterion that the representation
in question can detect the geometric intersection if and/or only if its kernel is small.
\par

In this paper, instead of considering any particular representation, 
we derive a similar criterion applicable to an {\em arbitrary group homomorphism} of the mapping class group
of a surface of genus at least one,
by focusing our attention on the following fact:
\begin{quote}
 The geometric intersection number between two simple closed curves is zero
 if and only if the commutator of the two Dehn twists along them represents
 the identity in the mapping class group.
\end{quote}
\par\goodbreak\medskip
We now describe our main result.
Let $\Sigma_{g,n}$ be an oriented compact connected surface of
genus $g \geq 1$ with $n \geq 0$ boundary components.
The mapping class group $\M_{g,n}$ of $\Sigma_{g,n}$ is defined as the group of all the isotopy classes of
orientation preserving homeomorphisms of $\Sigma_{g,n}$ where all homeomorphisms and isotopies
are assumed to preserve the boundary of $\Sigma_{g,n}$ pointwise. Let $\es$ be the set of all the isotopy
classes of essential simple closed curves on $\Sigma_{g,n}$. Here,  essential is meant to be  not
homotopic to a point nor parallel to any of the boundary components.
For $c \in \es$, we denote by $t_c$ the right-handed Dehn twist along $c$.
We denote by $\esns$ the subset of $\es$ consisting of all the isotopy classes of nonseparating
simple closed curves. The {\em commutator} of two elements $x$ and $y$ in a group is defined by
$[x, y] = x y x^{-1} y^{-1}$.
\par

Our criterion states that 
the triviality of geometric intersection number for {\em all} pairs of essential
simple closed curves can be detected by a homomorphisms of $\M_{g,n}$ if and only if
its kernel is small:

\begin{theorem}\label{thm:main}
 Let $G$ be a group and  $\rho: \M_{g,n} \to G$ an arbitrary homomorphism. If
 $[t_{c_1}, t_{c_2}] =1$ for those $c_1$, $c_2 \in \esns$ which satisfy $\rho( [t_{c_1}, t_{c_2}] ) = 1$,
 then the kernel of $\rho$ is contained in the center $Z(\M_{g,n})$ of $\M_{g,n}$.
 Conversely, if $\Ker{\rho} \subset Z(\M_{g,n})$, then $[t_{c_1}, t_{c_2}] = 1$ for any
 $c_1$ and $c_2 \in \es$ which satisfy $\rho([t_{c_1}, t_{c_2}]) = 1$.
\end{theorem}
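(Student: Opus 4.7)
The plan is to combine the algebraic identity $t_{f(c)} = f t_c f^{-1}$ with the geometric equivalence $i(c_1, c_2) = 0 \iff [t_{c_1}, t_{c_2}] = 1$ recalled in the introduction, together with classical facts about the subgroup structure of $\M_{g,n}$.

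For the forward direction, let $f \in \Ker\rho$ and $c \in \esns$. Since $\rho(f) = 1$,
\begin{equation*}
 \rho([t_c, t_{f(c)}]) = [\rho(t_c), \rho(f t_c f^{-1})] = [\rho(t_c), \rho(t_c)] = 1,
\end{equation*}
and because $c$ and $f(c)$ both lie in $\esns$, the hypothesis forces $[t_c, t_{f(c)}] = 1$, equivalently $i(c, f(c)) = 0$. The forward direction thus reduces to the geometric assertion that every $f \in \M_{g,n}$ with $i(c, f(c)) = 0$ for all $c \in \esns$ lies in $Z(\M_{g,n})$. I would establish this by showing that such an $f$ fixes every $c \in \esns$ as an isotopy class; then $f t_c f^{-1} = t_{f(c)} = t_c$ for each nonseparating $c$, and since nonseparating Dehn twists generate $\M_{g,n}$ (Lickorish), $f \in Z(\M_{g,n})$. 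The key step is upgrading ``disjoint from $c$'' to ``equal to $c$'': assuming for contradiction that $f(c_0) \neq c_0$ for some $c_0 \in \esns$, I would construct an auxiliary nonseparating curve $c$ with $i(c, f(c)) > 0$. The construction splits according to whether $(c_0, f(c_0))$ is a bounding pair; in the non-bounding case one picks a curve crossing $c_0$ but disjoint from $f(c_0)$ and applies a Dehn twist along $c_0$, while the bounding-pair case is the subtlest, since every curve crossing $c_0$ must also cross $f(c_0)$, and I expect to handle it by iterating high-power Dehn twists together with the linear growth of intersection numbers under $t_c^n$.

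For the converse, assume $\Ker\rho \subset Z(\M_{g,n})$ and take $c_1, c_2 \in \es$ with $\rho([t_{c_1}, t_{c_2}]) = 1$. Then $[t_{c_1}, t_{c_2}] \in \Ker\rho \subset Z(\M_{g,n})$ commutes in particular with $t_{c_1}$ and $t_{c_2}$, so it lies in the center of the subgroup $H := \langle t_{c_1}, t_{c_2}\rangle$. By classical results on two-generator subgroups of mapping class groups (after Ishida and Hamidi--Tehrani), $H$ is isomorphic to $\Z^2$, to a quotient of the braid group $B_3$, or to a free group $F_2$ of rank two, according as $i(c_1, c_2) = 0$, $= 1$, or $\geq 2$. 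In each case a direct abelianization argument gives $Z(H) \cap [H, H] = \{1\}$: this is immediate for $H = \Z^2$ and $H = F_2$, while for the $B_3$-case the center is generated by $(t_{c_1} t_{c_2})^3$, which has nonzero image in $B_3^{\mathrm{ab}} \cong \Z$, whereas every commutator maps to $0$. Since $[t_{c_1}, t_{c_2}] \in [H, H] \cap Z(H)$, we conclude $[t_{c_1}, t_{c_2}] = 1$. The principal obstacle to carrying out this plan is thus the ``disjoint-implies-equal'' geometric step in the forward direction.
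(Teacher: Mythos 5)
Your converse direction is correct, though it takes a genuinely different (and heavier) route than the paper, which simply writes $[t_{c_1},t_{c_2}]=\iota(t_{c_1}(c_2))\,\iota(c_2)^{-1}$ and invokes the fact that $t_a t_b^{-1}$ is central only if $a=b$ (Lemma \ref{fact:2.5}). If you keep your route via two-generator twist groups, do not hedge with ``a quotient of $B_3$'': when $\gi(c_1,c_2)=1$ the group $\langle t_{c_1},t_{c_2}\rangle$ is isomorphic to $B_3$ itself except on the closed torus, where it is $SL(2,\Z)$; for an unspecified proper quotient your claims about the center and the abelianization would need rechecking (they do hold for $SL(2,\Z)$, whose central element maps to the order-two class in $\Z/12$). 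With that case split made explicit, the converse stands.

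The forward direction, however, has a genuine gap. From the hypothesis you extract only the consequence $\gi(c,f(c))=0$ for every $c\in\esns$, i.e.\ you use the hypothesis only on pairs of the form $(c,f(c))$, and you are then forced to prove the purely geometric statement that a mapping class displacing every nonseparating curve off itself must fix every nonseparating curve. You leave this unproved (you flag the bounding-pair case yourself), and even the non-bounding sketch does not work as written: once you pass to an auxiliary curve built from $d$ and powers of $t_{c_0}$, you have no control over its image under $f$; all you know is $f(t_{c_0}^N(d))=t_{f(c_0)}^N(f(d))$ with $f(d)$ an unknown curve, and the standard twist--intersection inequalities then require information such as $\gi(c_0,f(d))>0$ which is simply not available. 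The fix is to use the full strength of the hypothesis, which concerns \emph{all} pairs of nonseparating curves: for $f\in\Ker\rho$ one has $\rho(t_{f(c)})=\rho(f t_c f^{-1})=\rho(t_c)$, hence $\rho([t_d,t_{f(c)}])=\rho([t_d,t_c])$ for every $d\in\esns$; so if $\gi(d,c)=0$ then $\rho([t_d,t_{f(c)}])=1$ by Lemma \ref{fact:1} and the hypothesis forces $\gi(d,f(c))=0$. If $f(c)\neq c$, Lemma \ref{fact:2} (with $d$ chosen nonseparating, as is possible since $c$ and $f(c)$ are nonseparating) produces a nonseparating $d$ with $\gi(c,d)=0$ and $\gi(f(c),d)\neq 0$, a contradiction. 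Thus $f(c)=c$ for all $c\in\esns$ with no hard geometry, and your concluding step (conjugation formula plus generation by nonseparating twists together with the central boundary twists) finishes the proof; this is exactly the injectivity of $\rho\circ\ins$ exploited in Lemma \ref{fact:3}.
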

\par
Note that the curves $c_1$ and $c_2$ 
need not be nonseparating in the latter half of Theorem \ref{thm:main}.

\begin{remark}
 The structure of the center $Z(\M_{g,n})$ is well-known due to Paris--Rolfsen \cite{pr}.
 If  $n=0$, $Z(\M_{g,n})$ is trivial except for the case $g \leq 2$, where the center
 is generated by the class of hyperelliptic involution. For the case of $g=1$ and $n=1$,
 the center is an infinite cyclic group generated by the ``half-twist'' along the unique
 boundary component. For all the other cases, $Z(\M_{g,n})$ is a free abelian group of
 rank $n$ and is generated by the Dehn twists along the boundary components of $\Sigma_{g,n}$.
\end{remark}
\par\goodbreak\medskip
The organization of this paper is as follows.
The proof of Theorem \ref{thm:main} is given in Section \ref{proof:criterion}
after necessary preparation. In Section \ref{johnson_filtration}, as an application, 
we discuss the geometric intersection in the Johnson filtration and pose a certain problem.
Also, in Section \ref{separating_curves}, we further consider the geometric intersection between
{\em separating} simple closed curves and provide a criterion similar to 
Theorem \ref{thm:main}. In Appendix \ref{appendix}, we give a proof of certain 
key lemma for Section \ref{separating_curves}.

\par

\section{Proof of Theorem \ref{thm:main}} \label{proof:criterion} \par

We first prepare some necessary results. We refer to \cite{fm} as basic reference for mapping class
groups of surfaces. We also need some results
in our previous work \cite{kasahara:GD} with certain modification.
\par

For $a$, $b \in \es$, the geometric intersection number, denoted by $\gi(a,b)$, is the minimum of the
number of the intersection points of the simple closed curves $\alpha$ and $\beta$
where $\alpha$ and $\beta$ vary the isotopy classes of $a$ and $b$, respectively. It defines a function
   $$\gi: \es \times \es \to \Z_{\geq 0}.$$
The following is the precise statement of the fact mentioned in Introduction (c.f.~Fact 3.9 in \cite{fm}).

 \begin{lemma}
  \label{fact:1}
 For $c_1$, $c_2 \in \es$, $\gi(c_1, c_2) = 0$ if and only if $[t_{c_1}, t_{c_2}] = 1$ in $\M_{g,n}$.
 \end{lemma}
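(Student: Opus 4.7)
My plan is to prove the two implications separately. The direction $\gi(c_1,c_2)=0 \Rightarrow [t_{c_1},t_{c_2}]=1$ is the easy one, handled by a direct support-disjointness argument: I would realize $c_1$ and $c_2$ by disjoint simple closed curves $\alpha_1, \alpha_2$, choose pairwise disjoint closed annular neighborhoods $A_1, A_2$, and take representatives of $t_{c_i}$ supported in $A_i$. Two self-homeomorphisms with disjoint supports commute as maps, so their isotopy classes commute in $\M_{g,n}$.

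For the converse, my plan is to translate the commutator identity into an equation about the action of Dehn twists on isotopy classes of curves. Using the conjugation formula $f t_c f^{-1} = t_{f(c)}$, which holds for any orientation-preserving mapping class $f$, the hypothesis $[t_{c_1}, t_{c_2}] = 1$ rewrites as $t_{t_{c_1}(c_2)} = t_{c_2}$. The classical fact that the assignment $c \mapsto t_c$ is injective on $\es$ then yields $t_{c_1}(c_2) = c_2$ as elements of $\es$.

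The main obstacle will be the final step: extracting $\gi(c_1,c_2)=0$ from the fact that $t_{c_1}$ fixes the isotopy class of $c_2$. I plan to invoke the standard quantitative inequality
   $$\gi(t_c^n(a), b) \geq |n|\,\gi(a,c)\,\gi(c,b) - \gi(a,b) \quad (n \neq 0),$$
and specialize it to $a = b = c_2$, $c = c_1$, $n = 1$ to obtain $\gi(t_{c_1}(c_2), c_2) \geq \gi(c_1,c_2)^2$. Combined with $t_{c_1}(c_2) = c_2$, which forces the left-hand side to equal $\gi(c_2, c_2) = 0$, this yields $\gi(c_1, c_2) = 0$ as desired. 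A more geometric alternative would be to use the bigon criterion directly: when $\alpha_1, \alpha_2$ are in minimal position with any intersection, $t_{\alpha_1}(\alpha_2)$ and $\alpha_2$ form no bigons yet still cross, hence cannot be isotopic, contradicting $t_{c_1}(c_2)=c_2$.
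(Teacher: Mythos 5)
Your proof is correct. The paper does not prove this lemma itself but cites it as Fact 3.9 of Farb--Margalit, and your argument is essentially the standard one given there: disjoint annular supports for the direction $\gi(c_1,c_2)=0 \Rightarrow [t_{c_1},t_{c_2}]=1$, and for the converse the conjugation formula $t_{c_1} t_{c_2} t_{c_1}^{-1} = t_{t_{c_1}(c_2)}$ together with injectivity of $c \mapsto t_c$ and the twist--intersection inequality (your specialization $\gi(t_{c_1}(c_2),c_2) \geq \gi(c_1,c_2)^2$ is exactly the needed estimate), so no gaps.
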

 \par\goodbreak\medskip

 The next is also well-known, and will be crucial in our argument.
 \par

 \begin{lemma} \label{fact:2}
 Suppose $c_1$, $c_2 \in \es$. If $c_1 \neq c_2$, then there exists $d \in \es$ such that
 $\gi(c_1, d) = 0$ and $\gi(c_2, d) \neq 0$.
 \end{lemma}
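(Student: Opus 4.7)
The plan is to split into two cases according to $\gi(c_1,c_2)$. In the easy case $\gi(c_1,c_2)>0$, the choice $d=c_1$ works immediately: a simple closed curve can always be isotoped off itself along a collar neighborhood, so $\gi(c_1,c_1)=0$, while $\gi(c_2,c_1)>0$ holds by hypothesis. So the substantive case is $\gi(c_1,c_2)=0$ with $c_1\neq c_2$.

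In this remaining case, I would realize $c_1$ and $c_2$ by disjoint representatives and cut $\Sigma_{g,n}$ along $c_1$; let $\Sigma'$ denote the component of the cut surface that contains $c_2$. The first step is to verify that $c_2$ is still essential in $\Sigma'$: since $c_1$ is essential, the inclusion $\Sigma'\hookrightarrow\Sigma_{g,n}$ is $\pi_1$-injective, so $c_2$ is non-null-homotopic in $\Sigma'$; and if $c_2$ were parallel to a boundary component of $\Sigma'$, that boundary would either be a copy of $c_1$ (contradicting $c_1\neq c_2$) or a boundary component of $\Sigma_{g,n}$ (contradicting $c_2\in\es$). The same dichotomy rules out $\Sigma'$ being a disk, an annulus, or a pair of pants, since in each of those surfaces every essential simple closed curve is peripheral. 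Hence $\Sigma'$ has enough complexity to admit an essential simple closed curve $d\subset\Sigma'$ that intersects $c_2$ essentially in $\Sigma'$; this is a standard fact about curves on surfaces.

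The final step is to transfer the conclusion from $\Sigma'$ back to $\Sigma_{g,n}$. The disjointness $\gi(c_1,d)=0$ is immediate since $d\subset\Sigma'$. For the essential intersection $\gi(c_2,d)>0$, the key point, which I expect to be the main obstacle, is to show that any bigon in $\Sigma_{g,n}$ bounded by subarcs of $c_2$ and $d$ must in fact lie in $\Sigma'$. The idea is that such a bigon $B$ is a disk whose boundary is disjoint from $c_1$, so $c_1\cap B$ is a disjoint union of simple closed curves in the interior of $B$; an innermost-disk surgery on any such circle would exhibit $c_1$ as bounding a disk in $\Sigma_{g,n}$, contradicting its essentialness. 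Hence $c_1\cap B=\emptyset$ and the bigon already lies in $\Sigma'$, so by the bigon criterion the absence of bigons in $\Sigma'$ upgrades to the absence of bigons in $\Sigma_{g,n}$, giving $\gi(c_2,d)>0$. Finally, $d\in\es$ is automatic once $\gi(c_2,d)>0$, since a peripheral or null-homotopic curve has zero geometric intersection with any essential one.
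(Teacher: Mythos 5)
Your argument is correct. Note that the paper does not prove Lemma \ref{fact:2} itself but cites Farb--Margalit, whose proof begins with exactly your case split (take $d=c_1$ when $\gi(c_1,c_2)>0$) and then handles the disjoint case by the change of coordinates principle: since $c_1$ and $c_2$ are disjoint and non-isotopic, their union can be put in a standard position in which a curve meeting $c_2$ but not $c_1$ is visibly present. You instead cut along $c_1$, check that $c_2$ stays essential and non-peripheral in the component $\Sigma'$ containing it, quote the standard fact that such a curve admits a curve $d\subset\Sigma'$ with $i_{\Sigma'}(c_2,d)>0$, and then transfer minimal position back to $\Sigma_{g,n}$ via the bigon criterion; this is a more hands-on but equivalent route, and the transfer step you flag as the main obstacle is handled correctly --- indeed it is even slightly simpler than your innermost-circle phrasing suggests, since $c_1$ is connected and disjoint from $\partial B$, so $c_1\cap B$ is either empty or all of $c_1$, and in the latter case $c_1$ lies in the disk $B$ and is null-homotopic, a contradiction. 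Be aware that the ``standard fact'' you invoke inside $\Sigma'$ is itself usually proved by change of coordinates, so the two arguments ultimately rest on the same principle; the cited proof simply applies it once to the pair $(c_1,c_2)$ rather than to $c_2$ inside the cut surface.
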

The proof can be found in \cite[page 73]{fm}.

We define the mapping
$\iota : \es \to \M_{g,n}$ by $\iota(c) := t_c$ for $c \in \es$.
It is known that $\iota$ is injective (c.f. \cite[Fact 3.6]{fm}).
As explained in \cite[Lemma 3.2]{kasahara:GD},
the proof of this fact, which depends on Lemma \ref{fact:2}, actually implies the following.

\begin{lemma}\label{fact:2.5}
 For any $c_1$ and $c_2 \in \es$, the element
 $\iota(c_1) \iota(c_2)^{-1}$ lies in the center $Z(\M_{g,n})$  if and only if $c_1 = c_2$.
\end{lemma}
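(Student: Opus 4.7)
The statement splits into two directions. The ``if'' direction is immediate, since $c_1 = c_2$ makes $\iota(c_1)\iota(c_2)^{-1}$ equal to the identity, which lies in every subgroup including $Z(\M_{g,n})$. For the ``only if'' direction, the plan is to argue by contraposition: assume $c_1 \neq c_2$ and produce an element of $\M_{g,n}$ that fails to commute with $t_{c_1} t_{c_2}^{-1}$, thereby showing this element is noncentral.

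The natural witness is furnished by Lemma \ref{fact:2}: choose $d \in \es$ with $\gi(c_1, d) = 0$ and $\gi(c_2, d) \neq 0$. By Lemma \ref{fact:1}, this means $t_d$ commutes with $t_{c_1}$ but not with $t_{c_2}$. Assume for contradiction that $t_{c_1} t_{c_2}^{-1}$ lies in $Z(\M_{g,n})$. Then it commutes with $t_d$, so $t_d (t_{c_1} t_{c_2}^{-1}) = (t_{c_1} t_{c_2}^{-1}) t_d$. Substituting $t_d t_{c_1} = t_{c_1} t_d$ on the left and cancelling $t_{c_1}$ should yield $t_d t_{c_2}^{-1} = t_{c_2}^{-1} t_d$, i.e., $t_d$ and $t_{c_2}$ commute. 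By Lemma \ref{fact:1}, this forces $\gi(c_2, d) = 0$, contradicting the choice of $d$.

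There is no real obstacle here: the witness $d$ is exactly the same one used in the standard proof that $\iota$ is injective, and the only added input is a single commutator manipulation to transfer noncommutation from $t_d$ versus $t_{c_2}$ to $t_d$ versus the quotient $t_{c_1} t_{c_2}^{-1}$. I expect the entire argument to be two or three lines once these ingredients are arranged, which is consistent with the authors' remark that the injectivity proof of $\iota$ already contains Lemma \ref{fact:2.5}.
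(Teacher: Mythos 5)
Your proof is correct and follows essentially the same route as the paper, which attributes the lemma to the proof of injectivity of $\iota$ based on Lemma \ref{fact:2}: pick the separating witness $d$ with $\gi(c_1,d)=0$, $\gi(c_2,d)\neq 0$, and note via Lemma \ref{fact:1} that centrality of $t_{c_1}t_{c_2}^{-1}$ would transfer commutation with $t_d$ from $t_{c_1}$ to $t_{c_2}$, a contradiction. Nothing further is needed.
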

The following is a slight generalization of our previous result \cite{kasahara:GD}, where
we dealt with $\es$ instead of $\esns$. We denote by $\ins$ the restriction of $\iota$ to $\esns$.
\par

\begin{lemma} \label{fact:3}
 Let  $G$ be a group and $\rho : \M_{g,n} \to G$ an arbitrary homomorphism. 
 \begin{enumerate}[label={\normalfont (\arabic*)},topsep=2pt,itemsep=1pt,parsep=1pt]
  \item If the mapping $\rho \circ \ins$ is injective, then $\Ker{\rho} \subset Z(\M_{g,n})$.
  \item If $\Ker{\rho} \subset Z(\M_{g,n})$, then the mapping $\rho \circ \iota$ is injective.
 \end{enumerate}
\end{lemma}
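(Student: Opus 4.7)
The plan is to tackle the two implications separately, with part (2) being a quick consequence of Lemma \ref{fact:2.5} and part (1) requiring the standard action of mapping classes on isotopy classes of curves.

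For part (2), I would argue directly: assume $\Ker \rho \subset Z(\M_{g,n})$ and suppose $\rho(\iota(c_1)) = \rho(\iota(c_2))$ for some $c_1, c_2 \in \es$. Then $\iota(c_1)\iota(c_2)^{-1} \in \Ker \rho \subset Z(\M_{g,n})$, so Lemma \ref{fact:2.5} forces $c_1 = c_2$, proving injectivity of $\rho \circ \iota$. Nothing else is needed.

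For part (1), assume $\rho \circ \ins$ is injective and let $\phi \in \Ker \rho$. The key identity is the conjugation rule $\phi t_c \phi^{-1} = t_{\phi(c)}$ for any $c \in \es$, where $\phi(c)$ denotes the image of the isotopy class under the mapping class $\phi$. If $c \in \esns$, then $\phi(c) \in \esns$ as well (mapping classes preserve the topological type of the curve). Applying $\rho$ to the identity and using $\rho(\phi) = 1$ gives $\rho(t_{\phi(c)}) = \rho(t_c)$, and injectivity of $\rho \circ \ins$ then yields $\phi(c) = c$ for every $c \in \esns$. Equivalently, $\phi$ commutes with every Dehn twist along a nonseparating simple closed curve.

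To conclude $\phi \in Z(\M_{g,n})$, I would invoke the classical fact that for $g \geq 1$, the group $\M_{g,n}$ is generated by Dehn twists along nonseparating simple closed curves together with the Dehn twists along the boundary components (which are central, hence commute with $\phi$ automatically). Therefore $\phi$ commutes with a generating set of $\M_{g,n}$, so $\phi \in Z(\M_{g,n})$. The main obstacle here is making sure the generation statement is correctly invoked for all cases $g \geq 1$, $n \geq 0$: in the closed case this is the Dehn--Lickorish--Humphries theorem, and when $n \geq 1$ one must add boundary twists, but since these are central the argument goes through unchanged. I would simply cite \cite{fm} for the generation result and note that boundary twists present no issue because they lie in the center.
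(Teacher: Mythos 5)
Your proposal is correct and follows essentially the same route as the paper: for part (1) you use the conjugation identity $\phi t_c \phi^{-1} = t_{\phi(c)}$ together with injectivity of $\rho \circ \ins$ to get $\phi(c)=c$ for all nonseparating $c$, and then the fact that $\M_{g,n}$ is generated by nonseparating twists plus the (central) boundary twists --- exactly the paper's argument, which cites Gervais for this generating set where you cite the standard references. For part (2) you argue directly from Lemma \ref{fact:2.5}, which is precisely the argument the paper implicitly relies on by citing its earlier work, so there is no substantive difference.
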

\par
The second part of Lemma \ref{fact:3} follows from \cite[Lemma 3.2]{kasahara:GD}. 
The subtle point for the first part is to observe that one can choose a generating set for $\M_{g,n}$
due to Gervais \cite{gervais} from
the image of $\ins$ and the Dehn twists along boundary components of $\Sigma_{g,n}$. This follows
actually from the construction there.
\par
 \begin{proof}[Proof of Lemma \ref{fact:3}]It suffices to prove the first part.
  We first recall the effect of the natural action of $\M_{g,n}$ on $\es$ over the image of $\iota$.
  For any $f \in \M_{g,n}$ and $c \in \es$, it holds
  \begin{equation} \label{effect}
   \iota(f(c)) = f \cdot \iota(c) \cdot f^{-1}
  \end{equation}
  (c.f. \cite[Fact 3.7]{fm}).
  \par
  Suppose next that the mapping $\rho \circ \ins$ is injective. Let $f \in \Ker{\rho}$.
  Then for any $c \in \esns$, we have
  $$\rho \circ \ins (f(c)) = \rho(f \cdot \ins(c) \cdot f^{-1})  = \rho \circ \ins(c).$$
  Hence we have $f(c) = c$ for each $c \in \esns$. In view of \eqref{effect}, this shows that
  $f$ commutes with each element of $\ins(\esns)$.   As mentioned above, the mapping class
  group $\M_{g,n}$ is generated by $\ins(\esns)$ together with the Dehn twists along boundary
  components of $\Sigma_{g,n}$. The latter type of mapping classes obviously lie in the center $Z(\M_{g,n})$.
  Therefore, we have $f \in Z(\M_{g,n})$, and hence $\Ker{\rho} \subset Z(\M_{g,n})$.
  This completes the proof of Lemma \ref{fact:3}.  
 \end{proof}
\par\goodbreak\medskip

We are now ready to prove Theorem \ref{thm:main}. Let $\rho: \M_{g,n} \to G$ be an
arbitrary group homomorphism. Suppose $[t_{c_1}, t_{c_2}] = 1$ for those $c_1$, $c_2 \in \esns$
which satisfy $\rho([t_{c_1}, t_{c_2}]) = 1$.
For the first part of the theorem, by virtue of Lemma \ref{fact:3} (1),
it suffices to show that $\rho \circ \ins$ is injective.
For any $c_1$, $c_2 \in \esns$ with $c_1 \neq c_2$, by Lemma \ref{fact:2}, we may choose
$d \in \es$ such that $\gi(c_1, d) = 0$ and $\gi(c_2,d) \neq 0$.
Furthermore, since $c_1$ and $c_2$ are nonseparating, it can
be seen that we may assume $d$ is also nonseparating.
Then by Lemma \ref{fact:1} and the assumption of the theorem, respectively, we have
$$\rho( [t_{c_1}, t_d] ) = 1, \quad \text{and} \quad \rho( [t_{c_2}, t_d]) \neq 1.$$
This implies $\rho \circ \ins(c_1) \neq \rho \circ \ins(c_2)$,
which shows that $\rho \circ \ins$ is injective.
\par
Next, suppose conversely that $\Ker{\rho} \subset Z(\M_{g,n})$.
Then $\rho \circ \iota$ is injective by Lemma \ref{fact:3} (2).
For $c_1$, $c_2 \in \es$, assume $\rho([t_{c_1}, t_{c_2}]) = 1$. Since
$[t_{c_1}, t_{c_2}] = \iota(t_{c_1}(c_2)) \cdot \iota(c_2)^{-1}$ in view of \eqref{effect},
we have $\iota(t_{c_1}(c_2)) \cdot \iota(c_2)^{-1} \in Z(\M_{g,n})$. We can then conclude
$t_{c_1}(c_2) = c_2$ by Lemma \ref{fact:2.5}. This implies $[t_{c_1}, t_{c_2}] = 1$.
This completes the proof of Theorem \ref{thm:main}.
\qed
\par

\section{The Johnson filtration} \label{johnson_filtration} \par

Let $g \geq 2$. We now consider the relation of geometric intersection with the Johnson filtration.
We consider only the case of $n=1$ in order to avoid the problem that there are no canonical
choices of the filtration for $n >1$. 
\par

Let $\Gamma$ be the fundamental group of the
surface $\Sigma_{g,1}$ with a fixed base point on the boundary,
which is a free group of rank $2g$. The lower central series 
of $\Gamma$, denoted by $\{ \Gamma_k \}_{k \geq 1}$, is defined recursively by
$\Gamma_1 = \Gamma$, and $\Gamma_k = [\Gamma, \Gamma_{k-1}]$ for $k \geq 2$. For each $k \geq 1$,
$\Gamma_k$ is a characteristic subgroup of $\Gamma$, so that the natural action of $\M_{g,1}$ on $\Gamma$
gives rise to the one on the quotient nilpotent group $N_k := \Gamma/\Gamma_{k+1}$. The latter action induces
a homomorphism, which we denote by
   $$\rho_k: \M_{g,1} \to \Aut{N_k}.$$
We denote the kernel of $\rho_k$ by $\M(k)$. These $\M(k)$'s form a descending central filtration
of $\M_{g,1}$ which is called the Johnson filtration. It follows, by definition,  that $\M(k)$
is the domain of the $k$th Johnson homomorphism for $k \geq 1$, and is also the kernel
of the $k-1$st Johnson homomorphism for $k \geq 2$.
\par\goodbreak\medskip

The following shows, in view of Lemma \ref{fact:1},
that the Johnson filtration in any finite depth does not detect
the geometric intersection.

\begin{cor}\label{cor}
 For any integer $k \geq 1$, there exists a pair of simple closed curves
 $c_1$ and $c_2 \in \es$ such that the commutator of the Dehn twists along them lies in $\M(k)$
 but is not the identity. Furthermore, one can always choose such $c_1$ and $c_2$ from
 $\esns$.
\end{cor}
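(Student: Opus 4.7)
My plan is to deduce the corollary from Theorem \ref{thm:main}, applied in contrapositive form to the homomorphism $\rho_k : \M_{g,1} \to \Aut{N_k}$, whose kernel is $\M(k)$. The contrapositive of the first statement of Theorem \ref{thm:main}, applied with $\rho = \rho_k$, reads: if $\M(k) \not\subset Z(\M_{g,1})$, then there exist $c_1, c_2 \in \esns$ with $\rho_k([t_{c_1}, t_{c_2}]) = 1$---equivalently $[t_{c_1}, t_{c_2}] \in \M(k)$---and $[t_{c_1}, t_{c_2}] \neq 1$. This is exactly Corollary \ref{cor} together with its nonseparating refinement, so the entire problem reduces to showing $\M(k) \not\subset Z(\M_{g,1})$ for each $k \geq 1$.

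By the Remark, $Z(\M_{g,1}) = \langle t_\partial \rangle$ is infinite cyclic, generated by the Dehn twist along the boundary, so it suffices to exhibit in $\M(k)$ an element that is not a power of $t_\partial$. I will handle this in two cases. For $k \leq 2$, the Johnson kernel $\M(2) \subseteq \M(k)$ contains the Dehn twist $t_c$ along every separating simple closed curve $c$; picking such a $c$ not isotopic to $\partial$ (which exists since $g \geq 2$), the twist $t_c$ is not a power of $t_\partial$, for example by comparing canonical reduction systems in Nielsen--Thurston theory. For $k \geq 3$, it is cleaner to show directly that $\M(k) \cap \langle t_\partial \rangle = \{1\}$: the class $t_\partial$ acts on $N_k$ as conjugation by $\delta := [a_1, b_1] \cdots [a_g, b_g]$, so $t_\partial^m \in \M(k)$ if and only if $\delta^m \in \Gamma_k$; since the image of $\delta$ in $\Gamma_2/\Gamma_3 \cong \Lambda^2 H$ is the nonzero symplectic class, $\delta^m \notin \Gamma_3$ for any $m \neq 0$, and therefore no nontrivial power of $t_\partial$ lies in $\M(k)$ when $k \geq 3$. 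In this range any nontrivial element of $\M(k)$ suffices.

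The hardest step is guaranteeing $\M(k) \neq \{1\}$ for arbitrarily large $k$, i.e., the non-triviality of the Johnson filtration at arbitrary depth. This is classical but not formal; the cleanest route is to cite Morita's result that the $k$th Johnson homomorphism $\tau_k$ is nontrivial for every $k \geq 1$ when $g \geq 2$, which, since $\tau_k$ is defined on $\M(k)$, immediately implies $\M(k) \neq \{1\}$.
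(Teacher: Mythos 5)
Your reduction is exactly the paper's: apply Theorem \ref{thm:main} (in contrapositive) to $\rho_k$, so that everything comes down to showing $\M(k) \not\subset Z(\M_{g,1})$ for every $k \geq 1$. Where you diverge is in the proof of this noncentrality claim, and most of your argument there is sound: for $k \leq 2$ a separating twist works (Johnson \cite{johnson:abelian} puts it in $\M(2)$, and it is not a power of $t_\partial$), and your computation for $k \geq 3$ --- that $t_\partial^m \in \M(k)$ forces $\delta^m$ to be central in $N_k$, hence $\delta^m \in \Gamma_k \subset \Gamma_3$, which is impossible for $m \neq 0$ since $\delta$ maps to the nonzero symplectic class in $\Gamma_2/\Gamma_3 \cong \Lambda^2 H$ --- is correct and gives the stronger statement $\M(k) \cap Z(\M_{g,1}) = \{1\}$ for $k \geq 3$.

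The gap is the step you yourself flag as the hardest: you still need $\M(k) \neq \{1\}$ for all $k$, and you discharge it by citing ``Morita's result that $\tau_k$ is nontrivial for every $k \geq 1$ when $g \geq 2$.'' That is not a theorem you can safely attribute in this form: Morita's paper \cite{morita:duke} proves $[\M(k),\M(l)] \subset \M(k+l)$ and analyzes $\tau_2$, $\tau_3$ and the trace maps, but it does not assert nontriviality (let alone strictness) of the filtration in every degree, and for $g=2$ in particular I know of no clean reference for ``$\tau_k \neq 0$ for all $k$.'' Moreover the statement you invoke is much stronger than what you need, which is only $\M(k) \neq \{1\}$. The paper closes exactly this point with an elementary uniform argument: two separating curves with $\gi \geq 2$ generate a free group $F$ of rank $2$ by Ishida \cite{ishida}; $F \subset \M(1)$ by Johnson \cite{johnson:abelian}; by Morita's bracket relation the $k$-th term of the lower central series of $F$ lies in $\M(k)$; and since these terms are nonabelian free groups they cannot lie in the (cyclic) center. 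That single argument produces noncentral elements of $\M(k)$ for every $k$ at once, so it removes both your case split and the unverified citation; with it substituted for your last step, your proof is complete.
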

\par

\begin{proof}
 Suppose to the contrary that there exists some $k_0 \geq 1$ such that the condition
 $[t_{c_1}, t_{c_2}] \in \M(k_0)$ for any $c_1$, $c_2 \in \esns$
 implies $[t_{c_1}, t_{c_2}] = 1$ in $\M_{g,1}$. 
 Then we can apply Theorem \ref{thm:main} for $\rho = \rho_{k_0}$ to see that
 $\M(k_0)$ must be contained in the center $Z(\M_{g,1})$.
 Therefore, it is sufficient to confirm:

\begin{claim} \label{claim:noncentral}
 For each $k \geq 1$, $\M(k)$ is not contained in the center $Z(\M_{g,1})$.
\end{claim}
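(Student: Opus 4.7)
The plan is as follows. Since $g \geq 2$ and $n = 1$, the remark following Theorem~\ref{thm:main} identifies $Z(\M_{g,1}) = \langle t_\partial \rangle$ as infinite cyclic, hence abelian. It therefore suffices to exhibit, for every $k \geq 1$, a non-abelian subgroup of $\M(k)$, since no such subgroup can lie inside an abelian group. I would produce one by pushing a free subgroup of rank two down the Johnson filtration.

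Two ingredients are required. First, the Johnson filtration is central in the sense $[\M(i), \M(j)] \subset \M(i+j)$, which is elementary from the definition of $\M(k)$ as the kernel of the action on the nilpotent quotient $N_k = \Gamma/\Gamma_{k+1}$ via standard commutator calculus in $\Aut{N_{i+j}}$; by induction this yields $\gamma_k(\M(1)) \subset \M(k)$, where $\gamma_k$ denotes the $k$th term of the lower central series. Second, the Torelli group $\M(1) = \mathcal{I}_{g,1}$ contains a free subgroup of rank two: pick two essential separating simple closed curves $c_1, c_2$ on $\Sigma_{g,1}$ with $\gi(c_1, c_2) \geq 2$, which is possible for $g \geq 2$, so that $t_{c_1}, t_{c_2} \in \mathcal{I}_{g,1}$, and apply a standard ping-pong argument (going back to Ishida and Hamidi--Tehrani) to conclude that $F := \langle t_{c_1}^m, t_{c_2}^m \rangle$ is free of rank two for $m$ sufficiently large. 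Combining the two, $\gamma_k(F) \subset \M(k)$ for every $k \geq 1$.

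It then remains to check that $\gamma_k(F)$ is non-abelian for every $k \geq 1$. For $k = 1$, $\gamma_1(F) = F$ is free of rank two; for $k = 2$, $\gamma_2(F) = [F, F]$ is classically a free group of countably infinite rank; and for $k \geq 3$, the Witt formula gives $\mathrm{rank}_{\Z}(\gamma_k(F)/\gamma_{k+1}(F)) \geq 2$, so $\gamma_k(F)$ is not cyclic, and since any abelian subgroup of a free group is cyclic, $\gamma_k(F)$ is non-abelian. Therefore $\M(k) \supset \gamma_k(F)$ is non-abelian and cannot lie in $Z(\M_{g,1})$, establishing the claim. The main obstacle I anticipate is the ping-pong statement for Dehn twists along curves with $\gi \geq 2$; while classical, it is not entirely elementary. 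An alternative route would combine non-nilpotence of $\mathcal{I}_{g,1}$ (Mess's theorem for $g = 2$, free-subgroup constructions for $g \geq 3$) with a direct verification that $Z(\M_{g,1}) \cap \M(3) = \{1\}$, obtained by computing that $t_\partial$ acts on $N_k$ as conjugation by the surface relator $\zeta = \prod_i [a_i, b_i] \in \Gamma_2$, whose bracket with a standard generator of $\Gamma$ is nonzero in the torsion-free group $\Gamma_3/\Gamma_4$.
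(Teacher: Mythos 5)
Your proposal is correct and follows essentially the same route as the paper: a rank-two free group generated by (powers of) Dehn twists along two separating curves with $\gi \geq 2$ (Ishida), contained in $\M(1)$ by Johnson, whose lower central series lies in $\M(k)$ via $[\M(i),\M(j)] \subset \M(i+j)$ (Morita), and whose terms cannot sit in the center. Your Witt-formula verification that each $\gamma_k(F)$ is non-abelian merely makes explicit what the paper dismisses as obvious, so the two arguments coincide in substance.
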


This is well-known, but we provide a short proof for completeness.
 Let $a$, $b$ be two separating essential  simple  closed curves with $\gi(a, b) \geq 2$.
 Then the two Dehn twists along $a$ and $b$ generate
 a free group $F$ of rank $2$, due to the work by Ishida \cite{ishida}.
 Take the lower central series of $F$, each term of
 which is obviously non-trivial and is not contained in $Z(\M_{g,1})$.
 On the other hand, by the work of Johnson \cite{johnson:abelian},
 the Dehn twist along any {\em separating} essential simple closed curve lies in $\M(2)$, and therefore
 $F$ is contained in $\M(1)$. By Morita \cite{morita:duke}, we have $[\M(k), \M(l)] \subset \M(k+l)$.
 Hence for each $k \geq 1$, the $k$-th term of the lower central series of $F$ is contained in $\M(k)$.
 This proves the claim, and hence Corollary \ref{cor}.
\end{proof}
\par\goodbreak\medskip

\subsection*{Towards refinement} \par
In spite of Corollary \ref{cor}, the totality of the Johnson filtration can
detect the geometric intersection, due to the following:
\begin{theorem}[Johnson \cite{johnson:survey}] \label{fact:4}
 $$\bigcap_{k \geq 1}{\M(k)} = \{ 1 \}.$$ 
\end{theorem}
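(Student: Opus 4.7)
The plan is to combine two classical facts: the residual nilpotence of free groups (Magnus) with the faithfulness of the action of $\M_{g,1}$ on $\Gamma = \pi_1(\Sigma_{g,1})$ (a Dehn--Nielsen--Baer type result adapted to the boundary-fixing setting). Suppose $f \in \bigcap_{k \geq 1} \M(k)$. By definition of $\M(k) = \Ker \rho_k$, the induced automorphism $f_*$ of $\Gamma$ acts trivially on $N_k = \Gamma/\Gamma_{k+1}$ for every $k \geq 1$. Equivalently, for every $x \in \Gamma$ and every $k \geq 1$, the element $f_*(x) x^{-1}$ lies in $\Gamma_{k+1}$.

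Next I would invoke Magnus's theorem that a free group is residually nilpotent, i.e. $\bigcap_{k \geq 1} \Gamma_k = \{1\}$. Applied to each $x \in \Gamma$, this gives $f_*(x) x^{-1} = 1$, so $f_*$ is the identity automorphism of $\Gamma$.

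Finally, I would appeal to the fact that the natural homomorphism $\M_{g,1} \to \Aut{\Gamma}$ coming from the action on the fundamental group (with base point on the boundary) is injective. This is the variant of the Dehn--Nielsen--Baer theorem appropriate for a compact surface with one boundary component, where mapping classes fix the boundary pointwise; a standard reference is \cite{fm}. Injectivity then forces $f = 1$ in $\M_{g,1}$, which proves the theorem.

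The step with any real content is the residual nilpotence of free groups; all the rest is the unwinding of definitions and the citation of the faithfulness of the outer-automorphism action. There is no genuine obstacle here beyond the correct invocation of these background facts, which is why the statement is attributed to Johnson's survey \cite{johnson:survey} without further argument in the literature.
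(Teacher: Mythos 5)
Your proof is correct and is the standard argument underlying Johnson's statement: residual nilpotence of the free group $\Gamma$ (Magnus) forces any $f$ in $\bigcap_{k\geq 1}\M(k)$ to act on $\Gamma$ as the identity automorphism, and the injectivity of $\M_{g,1} \to \Aut{\Gamma}$ (basepoint on the boundary, homeomorphisms and isotopies fixing the boundary pointwise) then gives $f=1$. The paper offers no proof of its own --- it simply cites Johnson's survey --- so there is nothing to compare beyond noting that your route is the expected one. One small correction to your closing remark: what you use (and must use) is faithfulness of the action by genuine automorphisms, not of the outer action; the boundary Dehn twist acts on $\Gamma$ by conjugation by the boundary word, so the map $\M_{g,1} \to \mathrm{Out}(\Gamma)$ is not injective, and your argument is fine precisely because you verified $f_{*}=\mathrm{id}$ in $\Aut{\Gamma}$.
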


In view of this fact, the following definition would be natural:

\begin{definition}
 For $c_1$ and $c_2 \in \es$, we define
 $$\jfi(c_1, c_2) :=
  \begin{cases}
   1 \quad \text{if $[t_{c_1}, t_{c_2}] \notin \M(1)$;} \\
   k+1 \quad \text{if $[t_{c_1}, t_{c_2}] \in \M(k)$ and $[t_{c_1}, t_{c_2}] \notin \M(k+1)$;} \\
   0 \quad \text{if $[t_{c_1}, t_{c_2}] \in \M(k)$ for all $k \geq 1$.}
  \end{cases}$$
\end{definition}

In this terminology, Corollary \ref{cor} can be rephrased as saying that the function
$\jfi$ is unbounded. This function should measure a kind of  complexity of
the configuration of two simple closed curves, and possibly has some relation with the geometric intersection
number. However, we do not know any explicit relationship except for two cases: 
$\jfi(c_1, c_2) = 0$ if and only if $\gi(c_1, c_2) = 0$, which is  due to Theorem \ref{fact:4} and
Lemma \ref{fact:1}; $\jfi(c_1,c_2) \geq 2$ if and only if $\gi(c_1, c_2) \neq 0$ and
the algebraic intersection number
of $c_1$ and $c_2$ with respect to arbitrarily fixed orientations on them is zero.
We also note that the latter shows $\gi(c_1, c_2) = 1$ implies 
$\jfi(c_1, c_2) = 1$. 
It would be an interesting problem to study further relation.
\par

\section{The case of separating curves}\label{separating_curves} \par

Let $g \geq 1$ and $n \geq 0$ again.
We denote by  $\essep$ the complement of $\esns$ in $\es$, which consists of the isotopy 
classes of essential {\em separating} simple closed curves on $\Sigma_{g,n}$. 
It is easy to see that $\essep$ is empty if and only if the Euler number 
$\chi(\Sigma_{g,n}) \geq -1$, {\em i.e.}, $g=1$ and $n \leq 1$. 
We can prove that Theorem \ref{thm:main} still holds true if $\esns$ is replaced by 
$\essep$, so far as $\essep$ is not empty.
\par\goodbreak\medskip

We assume in this section $\chi(\Sigma_{g,n}) \leq -2$ so that $\essep$ is not empty. 
Let $\isep \colon \essep \to \M_{g,n}$ denote the restriction of $\iota$ to $\essep$.
\begin{theorem} \label{thm:B}
 Assume $\chi(\Sigma_{g,n}) \leq -2$. Let $G$ be a group, and $H$ an arbitrary subgroup of $\M_{g,n}$
 which contains the image of $\isep$. Suppose $\rho \colon H \to G$ is an arbitrary homomorphism.
 Then the following holds.
 \begin{enumerate}[label={\normalfont (\arabic*)},topsep=2pt,itemsep=1pt,parsep=1pt]
  \item If $[t_{c_1}, t_{c_2}] = 1$ for those $c_1$, $c_2 \in \essep$ which satisfy
	$\rho([t_{c_1}, t_{c_2}]) = 1$, then $\Ker{\rho} \subset Z(\M_{g,n})$.
  \item If $\Ker{\rho} \subset Z(\M_{g,n})$, then $[t_{c_1}, t_{c_2}] = 1$ for
	those $c_1$, $c_2 \in \essep$ which satisfy $\rho([t_{c_1}, t_{c_2}]) = 1$.
 \end{enumerate}
\end{theorem}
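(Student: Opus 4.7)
The plan is to run the two-step argument of Theorem \ref{thm:main} with $\essep$ in place of $\esns$ throughout. Two auxiliary statements are needed to make this substitution go through, and these constitute the content of Appendix \ref{appendix}: (A) for any distinct $c_1, c_2 \in \essep$ there exists $d \in \essep$ with $\gi(c_1,d)=0$ and $\gi(c_2,d) \neq 0$, which is the analog of Lemma \ref{fact:2} restricted to the separating locus; and (B) if $f \in \M_{g,n}$ commutes with $t_c$ for every $c \in \essep$, then $f \in Z(\M_{g,n})$, which serves as a substitute for the Gervais generating-set argument used in the proof of Lemma \ref{fact:3}(1) (which is unavailable here, since separating twists do not generate $\M_{g,n}$).

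Granting (A) and (B), I would carry out part (1) as follows. First, $\rho \circ \isep$ is injective: given distinct $c_1, c_2 \in \essep$, apply (A) to choose $d \in \essep$ with $\gi(c_1,d)=0$ and $\gi(c_2,d) \neq 0$; by Lemma \ref{fact:1}, $[t_{c_1}, t_d]=1$ while $[t_{c_2}, t_d] \neq 1$. The contrapositive of the hypothesis then forces $\rho([t_{c_2}, t_d]) \neq 1$, whereas $\rho([t_{c_1}, t_d])=1$ automatically, so $\rho(t_{c_1}) \neq \rho(t_{c_2})$. Second, for $f \in \Ker{\rho} \subset H$, the formula \eqref{effect}---combined with the observation that $f$ preserves $\essep$ setwise, so that $f \cdot t_c \cdot f^{-1} = t_{f(c)}$ is again in $\isep(\essep) \subset H$---together with the injectivity of $\rho \circ \isep$ yields $f(c) = c$ for every $c \in \essep$. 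Thus $f$ commutes with every separating Dehn twist, and (B) concludes $f \in Z(\M_{g,n})$.

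Part (2) is essentially verbatim the converse direction of Theorem \ref{thm:main}: assuming $\Ker{\rho} \subset Z(\M_{g,n})$ and $\rho([t_{c_1}, t_{c_2}])=1$ for $c_1, c_2 \in \essep$, the commutator lies in $\Ker{\rho} \subset Z(\M_{g,n})$. Rewriting it as $\iota(t_{c_1}(c_2)) \cdot \iota(c_2)^{-1}$ via \eqref{effect} and invoking Lemma \ref{fact:2.5} yields $t_{c_1}(c_2) = c_2$, hence $[t_{c_1}, t_{c_2}] = 1$.

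The main obstacle is the pair of appendix lemmas (A) and (B), with (B) being the more delicate. For (A), my approach would be a direct cut-and-paste construction: cut $\Sigma_{g,n}$ along $c_1$, identify on which side $c_2$ lies, and produce $d$ in the complementary subsurface (for instance, parallel to a suitable boundary component arising from $c_1$ in a component of complexity at least $2$) so that $d$ is separating in $\Sigma_{g,n}$, disjoint from $c_1$, and unavoidably meets $c_2$; the hypothesis $\chi(\Sigma_{g,n}) \leq -2$ should be exactly what guarantees enough topological room for such a $d$ to exist, which explains why the statement fails otherwise. Lemma (B) is a rigidity statement asserting that the centralizer of the collection of separating Dehn twists coincides with the center of $\M_{g,n}$; one natural route is to show that any $f$ fixing every separating isotopy class in fact fixes every nonseparating class as well (e.g.\ by detecting nonseparating classes through their configurations of transverse separating curves, or by analyzing bounding pairs), thereby reducing the problem to the fact that the centralizer of a generating set in $\M_{g,n}$ equals $Z(\M_{g,n})$.
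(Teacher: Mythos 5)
Your proposal is correct and follows essentially the same route as the paper: your auxiliary statements (A) and (B) are precisely the paper's Lemma \ref{fact:2prime} and Lemma \ref{fact:5} (the latter, via \eqref{effect}, being the statement that only central classes act trivially on $\essep$, proved in Appendix \ref{appendix} by showing that fixing all separating classes forces fixing all classes, using Brendle--Margalit's sharing-pair idea --- exactly the reduction you sketch), and with these in hand your two-step argument and the proof of part (2) coincide with the paper's adaptation of Theorem \ref{thm:main} via Lemmas \ref{fact:1}, \ref{fact:2.5} and \ref{lemma:A}.
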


\begin{remark}
 Theorem \ref{thm:B} seems to shed some light on the significance of the work by Suzuki. He gave in
 \cite{suzuki} explicit elements, as commutators of two Dehn twists along separating essential simple
 closed curves, of the kernel of the representation mentioned in Introduction, the
 linear representation of the Torelli group $\M(1)$ for
 $n=1$ which is defined as the Magnus representation associated with the abelianization of $\Gamma$.
\end{remark}

The proof of Theorem \ref{thm:B} is essentially the same as that of Theorem \ref{thm:main},
and here we give just a sketch of it, except for Lemmas \ref{fact:5} and \ref{lemma:A} below.
First, a little care in proving Lemma \ref{fact:2} obtains:

\begin{lemma}\label{fact:2prime}
 Suppose $c_1$, $c_2 \in \essep$. If $c_1 \neq c_2$, then there exists $d \in \essep$
 such that $\gi(c_1, d)= 0$ and $\gi(c_2, d) \neq 0$.
\end{lemma}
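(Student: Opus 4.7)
The plan is to adapt the proof of Lemma \ref{fact:2} so that the auxiliary curve produced is separating. First put $c_1$ and $c_2$ in minimal position. Because $c_1$ is separating, $\Sigma_{g,n}\setminus c_1$ decomposes into two open pieces whose closures we denote $\bar{S}_1$ and $\bar{S}_2$. A simple closed curve lying in $\bar{S}_i$ is separating in $\Sigma_{g,n}$ if and only if it separates $\bar{S}_i$ itself (since $c_1$ already separates $\Sigma_{g,n}$), so the task reduces to producing an essential separating curve of one $\bar{S}_i$ that essentially meets some arc component of $c_2\cap \bar{S}_i$.

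The main step is to invoke Lemma \ref{fact:2} to obtain some $d_0\in\es$ with $\gi(c_1,d_0)=0$ and $\gi(c_2,d_0)\neq 0$, and to isotope $d_0$ into $\bar{S}_1$, say. If $d_0$ already separates $\bar{S}_1$, then it serves as the desired $d$. Otherwise $d_0$ is nonseparating in $\bar{S}_1$, which forces $\bar{S}_1$ to have positive genus, and I would perform a surgery: choose a simple closed curve $e\subset \bar{S}_1$ with $\gi(d_0,e)=1$ and with $e$ disjoint from $c_2\cap \bar{S}_1$, then set $d:=\partial N(d_0\cup e)$. The resulting $d$ bounds a genus-one one-holed subsurface of $\bar{S}_1$ containing $d_0$ and $e$, hence separates $\bar{S}_1$ and so also $\Sigma_{g,n}$; moreover any arc of $c_2\cap \bar{S}_1$ that crossed $d_0$ must now enter and exit this subsurface across $d$, yielding $\gi(c_2,d)\neq 0$.

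The main obstacle is controlling the degenerate cases in which the surgery breaks down. The curve $d$ may fail to be essential in $\Sigma_{g,n}$ when $\bar{S}_1$ is itself a torus with one hole (then $d$ is parallel to $c_1$), and a curve $e$ disjoint from $c_2$ may fail to exist when $c_2$ fills $\bar{S}_1$. In each such pathology the remedy is to interchange the roles of $\bar{S}_1$ and $\bar{S}_2$, or to build the separating curve directly from an arc of $c_2\cap \bar{S}_i$ on whichever side carries enough topology. The hypothesis $\chi(\Sigma_{g,n})\leq -2$, which is exactly the condition $\essep\neq\emptyset$, is invoked here to guarantee that at least one of these pathways always succeeds.
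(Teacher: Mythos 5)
Your overall strategy---use Lemma \ref{fact:2} to produce $d_0$ with $\gi(c_1,d_0)=0$ and $\gi(c_2,d_0)\neq 0$, push $d_0$ into one side $\bar S_1$ of $c_1$, and if $d_0$ is nonseparating replace it by the boundary $d=\partial N(d_0\cup e)$ of a one-holed torus neighborhood---is a reasonable route, and it differs from what the paper intends (the paper merely adapts the change-of-coordinates proof of Lemma \ref{fact:2} in \cite{fm}, choosing the auxiliary curve to be separating in the standard picture). However, as written there are genuine gaps. First, your construction leans on choosing $e$ disjoint from $c_2\cap\bar S_1$, which you concede may not exist, and your remedies do not cover the very case where this fails: if $\gi(c_1,c_2)=0$ and $c_2$ fills the side $\bar S_1$ that contains it, then every curve with $\gi(c_2,\cdot)\neq 0$ must lie in $\bar S_1$, so ``interchanging the roles of $\bar S_1$ and $\bar S_2$'' is impossible (no admissible $d_0$ lives in $\bar S_2$), and ``building the separating curve directly from an arc of $c_2$'' is left as a slogan rather than an argument. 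Second, even when such an $e$ exists, the assertion that arcs of $c_2$ ``must enter and exit across $d$'' only shows that the chosen representatives intersect; it does not show $\gi(c_2,d)\neq 0$, i.e.\ that these intersections cannot be removed by isotopy.

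Both gaps can be closed by dropping the requirement that $e$ avoid $c_2$ and arguing isotopy-invariantly. Take any $e\subset\bar S_1$ meeting $d_0$ once (it exists since $d_0$ is nonseparating in $\bar S_1$), and set $T=N(d_0\cup e)$, $d=\partial T$. Then $d$ is disjoint from $c_1$ and separating, since it bounds the subsurface $T$; it is essential because otherwise $\Sigma_{g,n}\smallsetminus \operatorname{int}T$ would be a disk or an annulus, forcing $\chi(\Sigma_{g,n})\geq -1$. If $\gi(c_2,d)=0$, an isotopy puts $c_2$ either into $\Sigma_{g,n}\smallsetminus T$, contradicting $\gi(c_2,d_0)\neq 0$, or into $T$; but the only essential simple closed curves in the one-holed torus $T$ that are separating in $\Sigma_{g,n}$ are parallel to $\partial T=d$ (any other essential curve in $T$ has a dual curve in $T$ meeting it once, hence is nonseparating in $\Sigma_{g,n}$), and $c_2$ parallel to $d$ again contradicts $\gi(c_2,d_0)\neq 0$. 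Hence $\gi(c_2,d)\neq 0$ with no disjointness hypothesis on $e$. Note also that your worry about $d$ becoming parallel to $c_1$ when $\bar S_1$ is a one-holed torus is not actually a failure mode: such a $d$ is still essential and separating, and the conclusion still holds because $\gi(c_1,c_1)=0$; equivalently, when $\gi(c_1,c_2)\neq 0$ one may simply take $d=c_1$ at the outset, which is the easy case in the proof of Lemma \ref{fact:2}.
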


The following is a consequence of the results of Brendle--Margalit \cite{bm} and Kida \cite{kida} with a finite number of exceptions on $(g, n)$:

\begin{lemma} \label{fact:5}
	Assume $\chi(\Sigma_{g,n}) \leq -2$. A mapping class of $\M_{g,n}$ acts 
	trivially on $\essep$ if and only if it lies in the center $Z(\M_{g,n})$.
\end{lemma}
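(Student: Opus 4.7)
My plan is to split the proof into the easy "if" direction and the substantive "only if" direction, and to handle the latter via the rigidity theorems of Brendle--Margalit \cite{bm} and Kida \cite{kida}, with a short residual argument for the finitely many pairs $(g,n)$ not covered by those results.

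First I would dispose of the "if" direction by inspection. Under the standing assumption $\chi(\Sigma_{g,n}) \leq -2$, the description of $Z(\M_{g,n})$ recalled in the remark after Theorem \ref{thm:main} implies that the center is generated by the Dehn twists along the boundary components together with the hyperelliptic involution in the single additional case $(g,n)=(2,0)$. A boundary Dehn twist is supported in a collar of $\partial \Sigma_{g,n}$ and so fixes each isotopy class in $\essep$; and in the closed genus-two case every essential separating simple closed curve is isotopic to a curve invariant under the hyperelliptic involution, whose isotopy class it therefore fixes.

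For the converse, I would interpret the action of $\M_{g,n}$ on $\essep$ as the action on the vertex set of the complex of separating curves $\mathcal{C}_{\text{sep}}(\Sigma_{g,n})$, with simplices given by disjoint collections (or, where needed for small $n$, on the associated Torelli complex). A mapping class that fixes every vertex certainly fixes every simplex, and so acts as the identity on the whole complex. The main theorems of Brendle--Margalit and Kida identify, for all $(g,n)$ with $\chi(\Sigma_{g,n}) \leq -2$ outside a finite explicit list of low-complexity pairs, the group $\Aut{\mathcal{C}_{\text{sep}}(\Sigma_{g,n})}$ with $\M_{g,n}/Z(\M_{g,n})$ (up to an orientation factor), so that the natural map $\M_{g,n}/Z(\M_{g,n}) \to \Aut{\mathcal{C}_{\text{sep}}(\Sigma_{g,n})}$ is injective. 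This yields $f \in Z(\M_{g,n})$ in all these cases.

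The main obstacle is the finite list of $(g,n)$ for which the Brendle--Margalit/Kida results require care or do not literally apply; these are the cases I would treat in detail in the Appendix. For each such pair I would exhibit a finite family of separating curves filling $\Sigma_{g,n}$ tightly enough that, by an Alexander-method argument, a mapping class fixing each of their isotopy classes must be isotopic to a product of boundary Dehn twists and, possibly in genus two, the hyperelliptic involution---and hence must lie in $Z(\M_{g,n})$.
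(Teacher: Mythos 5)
Your ``if'' direction is fine, though the paper's is even quicker and uniform: a central element commutes with every Dehn twist, so by $\iota(f(c)) = f\,\iota(c)\,f^{-1}$ and the injectivity of $\iota$ it fixes every class in $\es$, with no case analysis of $Z(\M_{g,n})$ needed. For the ``only if'' direction, your overall strategy (quote Brendle--Margalit/Kida where they apply, treat finitely many exceptional $(g,n)$ by hand) is the route the paper explicitly declines to take, and there are two concrete gaps in it. First, \cite{bm} and \cite{kida} are proved for closed and punctured surfaces, whereas here mapping classes fix $\partial\Sigma_{g,n}$ pointwise and $Z(\M_{g,n})$ contains the boundary twists; to transfer their kernel statements you must cap the boundary components with punctured disks and use that the kernel of the capping homomorphism is generated by the boundary twists. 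That reduction is routine but it is not ``an orientation factor,'' and it has to be said.

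Second, and more seriously, the exceptional cases are exactly where the content of the appendix lies, and your sketch overreaches: if you only assume that $f$ fixes the isotopy classes of one finite filling family of separating curves, the Alexander method (Lemma 2.9 / Proposition 2.8 of \cite{fm}) only yields a representative preserving the union of those curves; modulo boundary twists, the pointwise stabilizer of such a family is a finite symmetry group of the configuration, and you must still exclude non-central symmetries (including, in genus two, the swap of the two annular complementary pieces) case by case, for a list of exceptional $(g,n)$ you have not identified -- and you never use the full hypothesis that $f$ fixes \emph{all} of $\essep$. The paper's proof avoids both issues uniformly: a sharing-pair configuration $A, B \in \essep$ inside an embedded $\Sigma_{1,2}$ forces a representative of $f$ to preserve $A \cup B$, hence to fix the core $c \in \esns$ of the annular complementary component (with an extra twist $t_e$ argument to rule out $f(c)=d$ when $(g,n)=(2,0)$); because $f$ fixes every separating class, this configuration can be transported by change of coordinates, so $f$ fixes every nonseparating class, commutes with all Dehn twists, and is therefore central since $\M_{g,n}$ is generated by $\iota(\es)$ together with $Z(\M_{g,n})$. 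To make your route work you would need either this transitivity bootstrap in the exceptional cases or explicitly chosen asymmetric filling families whose stabilizers you verify; as written, that step is a promissory note rather than a proof.
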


We include a proof of Lemma \ref{fact:5} as Appendix \ref{appendix} to confirm there are no exceptional cases.
\par\goodbreak\medskip

By making use of Lemmas \ref{fact:5} and \ref{fact:2.5}, we have the following analogue of Lemma \ref{fact:3}:

\begin{lemma} \label{lemma:A} Suppose $\chi(\Sigma_{g,n}) \leq -2$.
 Let $G$ be a group, and $H$ an arbitrary subgroup of $\M_{g,n}$ which contains the image of $\isep$.
 For any homomorphism $\rho: H \to G$, the mapping $\rho \circ \isep$ is injective
 if and only if $\Ker{\rho} \subset Z(\M_{g,n})$.
\end{lemma}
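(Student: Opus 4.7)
The plan is to mimic the two halves of Lemma \ref{fact:3}, replacing the Gervais-generating-set argument with a direct appeal to Lemma \ref{fact:5}. Both directions are short once the analogues of the tools (Lemmas \ref{fact:2.5} and \ref{fact:5}) are in hand; the only genuine care needed is to check that everything one writes actually lies in the subgroup $H$, so that $\rho$ can be applied.

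First I would prove the ``if'' direction. Assuming $\Ker \rho \subset Z(\M_{g,n})$, take $c_1, c_2 \in \essep$ with $\rho \circ \isep(c_1) = \rho \circ \isep(c_2)$. Then $\iota(c_1)\iota(c_2)^{-1}$ lies in $\Ker \rho \subset Z(\M_{g,n})$, and Lemma \ref{fact:2.5} forces $c_1 = c_2$. This gives injectivity of $\rho \circ \isep$. Note that $\iota(c_i) = \isep(c_i) \in H$ by hypothesis, so the expression is legitimately in the domain of $\rho$.

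Next I would prove the ``only if'' direction. Assume $\rho \circ \isep$ is injective, and let $f \in \Ker \rho$; in particular $f \in H$. For any $c \in \essep$, the class $f(c)$ is again separating (since any homeomorphism preserves the topological type of a curve), so $\isep(f(c))$ makes sense and lies in $H$. Using the conjugation formula \eqref{effect} inside $H$, one has
$$\rho \circ \isep(f(c)) = \rho\bigl( f \cdot \isep(c) \cdot f^{-1} \bigr) = \rho \circ \isep(c),$$
since $\rho(f) = 1$. By injectivity of $\rho \circ \isep$ on $\essep$, this yields $f(c) = c$ for every $c \in \essep$. Lemma \ref{fact:5} then concludes $f \in Z(\M_{g,n})$, so $\Ker \rho \subset Z(\M_{g,n})$.

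The only step requiring attention is the ``only if'' direction, where one must verify that the conjugates $f \cdot \isep(c) \cdot f^{-1}$ and the twist $\isep(f(c))$ both lie in $H$ so that $\rho$ may be evaluated on them; this is immediate from $\Ker \rho \subset H$ and the assumption that $H$ contains the image of $\isep$. Beyond that, the argument is a direct transcription of the pattern of Lemma \ref{fact:3}, with Lemma \ref{fact:5} doing the work that the Gervais generating set did before—this is actually a cleaner substitution, since one no longer needs a supplementary discussion of boundary twists.
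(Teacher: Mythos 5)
Your proof is correct and follows essentially the same route as the paper: the ``if'' direction via Lemma \ref{fact:2.5} applied to $\iota(c_1)\iota(c_2)^{-1}\in\Ker\rho\subset Z(\M_{g,n})$, and the ``only if'' direction via the conjugation formula \eqref{effect} and Lemma \ref{fact:5}. Your explicit check that the relevant elements lie in $H$ is a sensible addition the paper leaves implicit.
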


\begin{proof}
	Suppose that $\rho \circ \isep$ is injective. Then, in view of \eqref{effect}, any element of $\Ker{\rho}$ acts trivially on $\essep$, as in the proof of Lemma \ref{fact:3}. This implies $\Ker{\rho} \subset Z(\M_{g,n})$ by Lemma \ref{fact:5}. Conversely, suppose $\Ker{\rho} \subset Z(\M_{g,n})$. Then for any $c_1$, $c_2 \in \essep$ with $\rho \circ \isep(c_1) = \rho \circ \isep(c_2)$, we have $\rho( \iota(c_1) \iota(c_2)^{-1} ) = 1$ and hence $\iota(c_1) \iota(c_2)^{-1} \in Z(\M_{g,n})$. We then have $c_1 = c_2$ by Lemma \ref{fact:2.5}. This shows $\rho \circ \isep$ is injective.
\end{proof}

Now by appealing to Lemma \ref{fact:2prime}, the same argument for Theorem \ref{thm:main}  completes the proof of Theorem \ref{thm:B}.
\qed
\par\goodbreak\medskip

\begin{remark}
	Even if $g=0$, all the results in this section hold true under the same 
	assumption $\chi(\Sigma_{g,n}) \leq -2$. The proof is also the same, except
	that $\M_{g,n}$ is generated by $\isep(\essep)$, rather than $\ins(\esns)$, 
	together with $Z(\M_{g,n})$.
\end{remark}
\par

\appendix
\section{Proof of Lemma \ref{fact:5}} \label{appendix}\par

Crucial is Lemma \ref{lemma:C}, which is proved following the idea
of ``sharing pair" introduced by Brendle--Margalit \cite{bm}.
We note in case $g=0$ that Lemma \ref{fact:5} holds true, and the proof 
is easier since $\essep = \es$.

\par\goodbreak\medskip

Hereafter, we assume $g \geq 1$, $n \geq 0$, and $\chi(\Sigma_{g,n}) \leq -2$.
\par\goodbreak\medskip

We first recall the mapping $\iota$ is injective. Therefore, we see that 
any mapping class in the center $Z(\M_{g,n})$ acts trivially on $\essep$,
due to \eqref{effect}.
\par

Next,  we show that any mapping class which acts trivially on $\essep$ lies 
in $Z(\M_{g,n})$. This follows from the next lemma.

\begin{lemma}\label{lemma:C}
	Any mapping class of $\M_{g,n}$ 
	which acts trivially on $\essep$ acts also trivially on $\es$.
\end{lemma}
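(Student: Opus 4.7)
The plan is to show that any nonseparating essential simple closed curve $c \in \esns$ can be uniquely reconstructed from a finite configuration of separating curves. Once such a characterization is in hand, the lemma follows immediately: any $\phi \in \M_{g,n}$ that fixes every element of $\essep$ must fix each separating curve in the configuration, and therefore fix $c$ as well.

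First, I would realize $c$ as the core curve of an annular intersection of two one-holed torus subsurfaces, following the \emph{sharing pair} idea of Brendle--Margalit. Concretely, given $c \in \esns$, the assumption $\chi(\Sigma_{g,n}) \leq -2$ provides enough room to embed two one-holed tori $T_1, T_2 \subset \Sigma_{g,n}$ with $c \subset T_i$ nonseparating in each $T_i$ and with $T_1 \cap T_2$ a regular annular neighborhood of $c$. The boundary curves $s_i := \partial T_i$ then lie in $\essep$, and $c$ is characterized topologically as the unique isotopy class supported in $T_1 \cap T_2$.

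Second, I would argue that $\phi(s_i) = s_i$ forces $\phi(T_i) = T_i$. Since $\phi$ is a homeomorphism up to isotopy, fixing $s_i$ forces $\phi$ to permute the pair $\{T_i, \Sigma_{g,n} \setminus T_i\}$. In the generic case these two complementary pieces have different topological type (one-holed torus versus a surface of genus $g-1$ with $n+1$ boundary components), so $\phi(T_i) = T_i$ is automatic. For the symmetric case, essentially $(g,n) = (2,0)$ where both sides of $s_i$ are one-holed tori, I would enlarge the configuration by a further separating curve $s_3 \subset \Sigma_{g,n} \setminus T_i$ whose isotopy class distinguishes the two sides; since $\phi(s_3) = s_3$, this pins down $\phi(T_i) = T_i$. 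Combining, $\phi$ preserves $T_1 \cap T_2$ and hence fixes $c$.

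The main obstacle I foresee is carrying out the sharing pair construction uniformly over all small-complexity surfaces. When $\chi(\Sigma_{g,n}) = -2$, there is little room to maneuver, and one needs to verify by hand that suitable sharing pairs — and, in the $(2,0)$ case, appropriate auxiliary separating curves — actually exist. I expect this low-genus case analysis to constitute the bulk of the technical work, while the generic case is a direct geometric construction.
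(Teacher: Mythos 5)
Your overall strategy coincides with the paper's: realize $c \in \esns$ as the curve shared by two one-holed tori $T_1, T_2$ whose boundary curves are essential separating curves (the sharing-pair idea of Brendle--Margalit), and use the hypothesis that $\phi$ fixes all of $\essep$ to force $\phi$ to preserve the annulus $T_1 \cap T_2$, hence fix $c$; the generic case, where the two sides of $s_i=\partial T_i$ have different topological type, goes through as you describe. The genuine gap is your treatment of the symmetric case $(g,n)=(2,0)$. You propose to rule out the side-swap by an auxiliary essential separating curve $s_3 \subset \Sigma_{2,0} \smallsetminus T_i$, but no such curve exists: the complement of a one-holed torus in $\Sigma_{2,0}$ is again a one-holed torus, and every essential simple closed curve in it is either nonseparating in $\Sigma_{2,0}$ or isotopic to $s_i$ itself, so $\essep$ contains no class supported there that could distinguish the two sides. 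The ambiguity you are trying to kill is exactly the possibility that a representative homeomorphism fixing $\partial T_1 \cup \partial T_2$ swaps the annulus $T_1\cap T_2$ with the complementary annulus of $T_1\cup T_2$, sending $c$ to the core $d$ of the latter; since such a swap would give $f(c)=d\neq c$, it cannot simply be ignored, and your configuration as it stands does not exclude it.

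The paper resolves $(2,0)$ differently: it runs the identical argument on a second sharing pair, namely the image of the first under the Dehn twist $t_e$ along a suitably chosen curve $E$. The first pair yields $f(c)\in\{c,d\}$, the second yields $f(c)\in\{c,t_e(d)\}$, and since $t_e(d)\neq d$ one concludes $f(c)=c$. Some device of this kind (varying the sharing pair, rather than looking for a separating curve in the complement of $T_i$) is what your proof needs in the exceptional case. A smaller point to tighten: to pass from ``$\phi$ fixes $s_1$ and $s_2$ and preserves each side'' to ``$\phi$ fixes the core of $T_1\cap T_2$,'' you should realize $\phi$ by a single homeomorphism preserving $\partial T_1 \cup \partial T_2$ with the two boundaries in minimal position (Lemma 2.9 of Farb--Margalit, as invoked in the paper), so that it permutes the complementary regions and the two subsurface constraints can be used simultaneously.
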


Indeed, any mapping class which acts trivially on $\es$ commutes with all the 
Dehn twists, due to \eqref{effect}. As we noted before, $\M_{g,n}$ is 
generated by $\iota(\es)$ together with $Z(\M_{g,n})$. Therefore, 
such a mapping class lies in $Z(\M_{g,n})$. Hence, Lemma \ref{lemma:C} 
implies the desired result.

\begin{proof}[Proof of Lemma \ref{lemma:C}]
	Suppose $f \in \M_{g,n}$ acts trivially on $\essep$. We need to show 
	$f$ acts also trivially on $\esns$. Since $\chi(\Sigma_{g,n}) \leq -2$, we may choose
	a subsurface $S$ homeomorphic to $\Sigma_{1,2}$ in the interior of $\Sigma_{g,n}$ 
	so that its two boundary components are either
	essential simple closed curves on $\Sigma_{g,n}$, or parallel to boundary components of
        $\Sigma_{g,n}$.
	Unless $(g,n)=(2,0)$, we may further assume the 
	two boundary components of $S$ are 
	not isotopic in $\Sigma_{g,n}$.
	We take simple closed curves $A$, $B$, and $C$ on $\Sigma_{g,n}$ as depicted in Figure \ref{C-1} (a) and (b)
	where only a neighborhood of $S$ is drawn. 
	\par
		\begin{figure}[!h]
			\begin{center}
				\includegraphics[width=0.8\linewidth]{./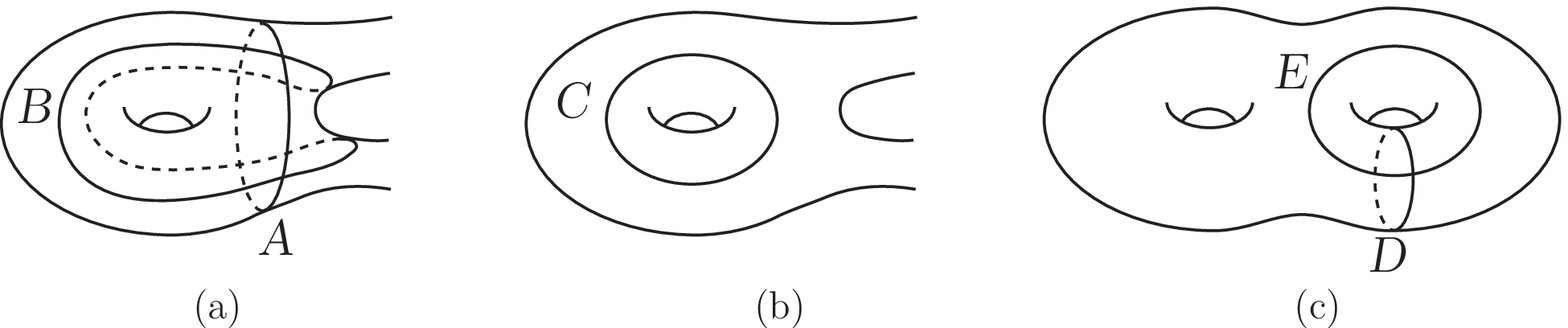}
			\end{center}
			\caption{}\label{C-1}
		\end{figure}
	\par
	We denote the isotopy classes of them by $a$, $b$, and $c \in \es$, respectively.
	Consider the pair $(A, B)$\footnote{This pair forms a sharing pair of Brendle--Margalit if the complement of the union of the two subsurfaces of $S$ both homeomorphic to $\Sigma_{1,1}$ bounded by $A$ and $B$, respectively 
		is connected.}. 
	Since $A$ and $B$ are essential and separating in $\Sigma_{g,n}$, we have $f(a) = a$ and $f(b) = b$.
	We also see $A$ and $B$ are in minimal position, and $a \neq b$. Then, we
	can see there exists a homeomorphism $F: \Sigma_{g,n} \to \Sigma_{g,n}$
	such that $F$ represents $f \in \M_{g,n}$ and preserves $A \cup B$ 
	setwise (see Lemma 2.9 in \cite{fm}.) In particular, $F$ induces a 
	homeomorphism of the complement of $A \cup B$. Also, the restriction of
	$F$ to the boundary is the identity. On the other hand, the simple closed
	curve $C$ is the core of an annulus component of 
	$\Sigma_{g,n} \smallsetminus (A\cup B)$
	which is disjoint from the boundary of $\Sigma_{g,n}$. 
	\par
	
	Unless $(g,n) = (2,0)$, such a component is unique, and we have $f(c) = c$.
	\par
	
	If $(g,n) = (2,0)$, then the above argument only shows either $f(c) = c$ or 
	$f(c) = d$ where $d$ denotes the isotopy class of the 
	simple closed curve $D$ depicted in Figure \ref{C-1} (c).
	Now we take $e \in \es$ represented by the simple closed curve $E$ 
	depicted in Figure \ref{C-1} (c), and consider another pair $(T_e(A), T_e(B))$
	where $T_e$ denotes any representative homeomorphism for $t_e$. Then 
	by the same argument we have either $f(c) = c$ or $f(c) = t_e(d)$.
	Since $t_e(d) \neq d$, we have $f(c) = c$. 
	\par
	Consequently, we have $f(c) = c$ for the particular $c \in \esns$ 
	in any case.
	Since any two elements of $\esns$ are mapped to each other by some 
	mapping class, the argument above shows $f$ acts trivially on $\esns$. 
	This completes the proof of Lemma \ref{lemma:C}.
\end{proof}

\subsubsection*{Acknowledgements}\par
The work in this paper started as an attempt to determine whether or not the function $\jfi$
introduced in Section \ref{johnson_filtration} is bounded.
The author is grateful to Ryosuke Yamamoto for drawing the author's attention
to the problem through his talk in the conference
``Hurwitz action $\sim$HINERU$\sim$'' in January 2015. 
\par

The author is grateful to  Susumu Hirose for useful discussion concerning Appendix.
He is also grateful to Nariya Kawazumi, Shigeyuki Morita, and Andrew Putman for 
comments on an earlier version of this paper. 
Finally, he is grateful to the referee for helpful suggestions and corrections.
\par
The author was partially supported by JSPS KAKENHI Grant Number 16K05154.

\providecommand{\bysame}{\leavevmode\hbox to3em{\hrulefill}\thinspace}
\providecommand{\MR}{\relax\ifhmode\unskip\space\fi MR }
\providecommand{\MRhref}[2]{%
  \href{http://www.ams.org/mathscinet-getitem?mr=#1}{#2}
}
\providecommand{\href}[2]{#2}

\vspace{12pt}
\noindent
\textsc{Yasushi Kasahara \\
Department of Mathematics \\
  Kochi University of Technology \\ Tosayamada,  Kami City, Kochi \\ 
  782-8502 Japan} \\
E-mail: {\tt kasahara.yasushi@kochi-tech.ac.jp}

\end{document}